\documentclass[12pt]{amsart}
\usepackage{stmaryrd}
\usepackage{mathrsfs}
\usepackage{amsmath,amssymb}
\usepackage{amsfonts}
\usepackage{amsthm}
\usepackage{latexsym}
\usepackage{graphicx,color}
\def\Rc{\mbox{Rc}}

\def\p{\partial}

\def\vv<#1>{\langle#1\rangle}

\def\1{\mathbf{1}}

\def\XXint#1#2{\setbox0=\hbox{$#1{#2}{\int}$}{#2}\kern-.5\wd0 }

\def\XXint#1#2#3{{\setbox0=\hbox{$#1{#2#3}{\int}$}
     \vcenter{\hbox{$#2#3$}}\kern-.5\wd0}}

\def\vv<#1>{{\left\langle#1\right\rangle}}

\def\Vol{{\rm Vol}}
\def\sph{\mathbb{S}}

\def\e{\epsilon}

\def\Rc{{\rm Rc}}
\def\Hess{{\rm Hess}}
\def\W{{\mathcal W}}
\def\Scal{{\rm Scal}}
\newtheorem{thm}{Theorem}[section]

\newtheorem{prop}{Proposition}[section]

\theoremstyle{definition}

\theoremstyle{remark}

\numberwithin{equation}{section}

\begin{document}
\title{A note on the equality case of Bray's conjecture}

\author{Xu-Qian Fan}
\address{Department of Mathematics, Jinan University, Guangzhou, 510632, China}
\email{txqfan@jnu.edu.cn}
\author{Chengjie Yu$^\dagger$}
\address{Department of Mathematics, Shantou University, Shantou, Guangdong, 515063, China}
\email{cjyu@stu.edu.cn}
%\thanks{$^1$**** }
\thanks{$^\dagger$ Research partially supported by GDNSF with contract no. 2025A1515011144 and 2026A1515012267.}
\renewcommand{\subjclassname}{%
  \textup{2020} Mathematics Subject Classification}
\subjclass[2020]{Primary 53C20; Secondary 53C21}
\date{}
\keywords{Bray's conjecture, W-entropy, volume comparison}
\begin{abstract}
In this short note, we characterize the equality case of Bray's conjecture recently proved by Jiang-Li-Wang \cite{JLW}.
\end{abstract}
\maketitle
\markboth{Fan \& Yu}{Rigidity of Bray's conjecture}
\section{Introduction}
In a recent preprint \cite{JLW}, Jiang-Li-Wang \cite{JLW} proved the following result which is equivalent to Bray's conjecture \cite{Br}.

\begin{thm}[Jiang-Li-Wang \cite{JLW}]\label{thm-JLW}
Let $n\geq 3$ be an integer. Then, there is a positive constant $\e_n$ such that for any $\e\in (0,\e_n]$, 
if the closed Riemannian manifold $(M^n,g)$ satisfies the curvature assumptions:
\[
\operatorname{Rc}_g \ge (n-1)g \mbox{ and } \operatorname{Scal}_g \ge n(n-1)(1+\epsilon),
\]
 then
\[
\operatorname{Vol}_g(M) \leq  |\mathbb{S}^n| (1+\epsilon)^{-n/2}.
\]
Here $\Rc_g$ and $\operatorname{Scal}_g$ are the Ricci tensor and scalar curvature of $g$ respectively, and $\Vol_g(M)$ is the volume of $(M,g)$. 
\end{thm}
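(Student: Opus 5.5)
The plan is to squeeze Perelman's $\W$-entropy between two bounds: a lower bound that uses the scalar curvature assumption, and an upper bound that uses the Ricci assumption through volume comparison. Comparing the two at a well-chosen scale $\tau$ should force the volume estimate. Recall
\[
\W(g,f,\tau)=\int_M\big[\tau(|\nabla f|^2+\Scal_g)+f-n\big](4\pi\tau)^{-n/2}e^{-f}\,dV_g,\qquad \mu(g,\tau)=\inf_{\int (4\pi\tau)^{-n/2}e^{-f}=1}\W(g,f,\tau).
\]
I will write $u=(4\pi\tau)^{-n/4}e^{-f/2}$ and work with the equivalent log-Sobolev functional in $u$.

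\textbf{Lower bound.} Since $\Scal_g\ge n(n-1)(1+\e)$, every admissible $f$ satisfies
\[
\W(g,f,\tau)\ge \tau n(n-1)(1+\e)+\int_M\big[\tau|\nabla f|^2+f-n\big](4\pi\tau)^{-n/2}e^{-f}\,dV_g .
\]
The remaining integral is a Dirichlet-energy-plus-entropy term. I will bound it from below using the sharp log-Sobolev/Sobolev inequality that $\Rc_g\ge(n-1)g$ provides, via Bakry--\'Emery or Ilias' sharp Sobolev inequality. Crucially, the constant must be normalized by $\Vol_g(M)/|\sph^n|$, so that the resulting lower bound is
\[
\mu(g,\tau)\ge \tau n(n-1)(1+\e)+\Phi_n(\tau)-\log\frac{\Vol_g(M)}{|\sph^n|}+o(1).
\]
Here $\Phi_n(\tau)$ denotes the corresponding quantity for the round sphere.

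\textbf{Upper bound.} Still using only $\Rc_g\ge(n-1)g$, I will test $\W$ with radial functions $f=\phi(d(p,\cdot))$ modeled on the optimizer for $\sph^n$. Bishop--Gromov and Laplacian comparison control the volume and energy terms. The scalar curvature term is the problem, since $\Scal_g$ has no upper bound. I intend to handle it not pointwise but through the Bochner formula. After integrating by parts, $\int \Scal_g\, e^{-f}$ should be rewritten using $\Rc_g(\nabla f,\nabla f)$ and $|\nabla^2 f|^2$ at the minimizer, via the Euler--Lagrange equation
\[
\tau(2\Delta f-|\nabla f|^2+\Scal_g)+f-n=\mu .
\]
This links $\mu$ to $\Rc_g$ rather than to $\Scal_g$.

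\textbf{Conclusion.} Comparing the two bounds, the $\e$-dependence enters only through the term $\tau n(n-1)(1+\e)$. Optimizing in $\tau$ should then give $\Vol_g(M)\le|\sph^n|(1+\e)^{-n/2}$, with equality exactly for the round sphere of radius $(1+\e)^{-1/2}$, which is the model case.

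\textbf{Main obstacle.} The hardest step is making the upper bound sharp, that is, controlling $\Scal_g$ integrated against the minimizer's weight without any upper curvature bound. The bounds must be compared at a scale $\tau$ for which the minimizer of $\mu(g,\tau)$ is non-constant, and that scale is where the smallness $\e\le\e_n$ should enter. For $\e$ small, I will first try to show that the minimizer is close to the spherical profile by a compactness/stability argument under $\Rc_g\ge(n-1)g$. Then I will perturb around the sphere, where the second variation of $\mu$ is explicitly computable.
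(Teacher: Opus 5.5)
Your lower-bound step matches the first ingredient of Jiang--Li--Wang in spirit. They transplant Beckner's sharp log-Sobolev inequality from $\sph^n$ by P\'olya--Szeg\"o symmetrization rather than via Ilias or Bakry--\'Emery. However, the volume term in your bound has the wrong sign. Rewriting $\W$ against the normalized measure $dV_g/\Vol_g(M)$ produces $+\ln\Vol_g(M)$, so the correct bound is \eqref{eq-comp-entropy}. Your version already fails for the round sphere of radius $(1+\e)^{-1/2}$: compare with constant $f$ at $\tau=\frac{1}{2(n-1)}$. With the right sign, \eqref{eq-comp-entropy} reduces the theorem to the volume-free upper bound $\nu(g)\le\nu(g_{\sph^n})$. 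This is where your proposal has a genuine gap.

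\textbf{Why your upper-bound mechanisms fail.} An upper bound on $\W$ requires an upper bound on $\tau\int_M\Scal_g\,(4\pi\tau)^{-n/2}e^{-f}dV_g$, and nothing in the hypotheses controls $\Scal_g$ from above.
\begin{itemize}
\item The Euler--Lagrange equation only re-expresses $\mu$, with $\Scal_g$ still in it.
\item The Bochner formula contains no $\Scal_g$. Under $\Rc_g\ge(n-1)g$ it bounds $\int\Rc_g(\nabla f,\nabla f)e^{-f}$ only from below, which is the wrong direction.
\item Test functions reintroduce the volume. For instance, constant $f$ gives $\nu(g)\le\ln\bigl(\Vol_g(M)(\bar S/2\pi n)^{n/2}\bigr)-\frac n2$, with $\bar S$ the mean scalar curvature. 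The $\ln\Vol_g(M)$ then cancels against \eqref{eq-comp-entropy}, and requiring this bound to be $\le\nu(g_{\sph^n})$ is already stronger than the theorem.
\item The compactness/second-variation fallback does not work either. $\Rc_g\ge(n-1)g$ yields only Gromov--Hausdorff (Cheeger--Colding) closeness to $\sph^n$, not the $C^{2,\alpha}$ closeness needed to expand $\mu$ around the round metric.
\end{itemize}

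\textbf{The missing idea} is to get $\nu(g)\le\nu(g_{\sph^n})$ from Ricci flow.
\begin{itemize}
\item If $\Vol_g(M)<|\sph^n|(1+\e)^{-n/2}$ there is nothing to prove. Otherwise rescale to $h=cg$ with $\Vol_h(M)=|\sph^n|$; then $c\le 1+\e$, so $\Rc_h\ge\frac{n-1}{1+\e}h$.
\item This gives $\int_M((n-1)-\lambda_h)_+^n\,dV_h<(n-1)^n\e^n|\sph^n|\le\delta_{\rm MW}(n,n)$.
\item Ma--Wang's theorem then makes the normalized Ricci flow from $h$ converge to the unit round sphere.
\item Monotonicity and scale invariance of $\nu$ give $\nu(g)=\nu(h)\le\nu(g_{\sph^n})$.
\end{itemize}
The flow is what smooths away the uncontrolled scalar curvature. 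This step, not locating a scale with a non-constant minimizer, is where the smallness $\e\le\e_n$ enters. The other constraint, $\e_n\le\frac{n}{n-1}e^{-1/n}-1$, is needed for \eqref{eq-comp-entropy}.
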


There are two main ingredients in the proof of Theorem \ref{thm-JLW} by Jiang-Li-Wang \cite{JLW}:
\begin{enumerate}
\item A comparison of Perelman's $\nu$-entropies between closed Riemannian manifolds satisfying the curvature assumptions in Theorem \ref{thm-JLW} and the standard unit sphere: By a P\'olya-Szeg\"o theorem for Schwartz symmetrization on spheres (see \cite{MS}) and Beckner's sharp logarithmic Sobolev inequality on spheres (see \cite{Be}), Jiang-Li-Wang obtained the following comparison of entropies:
\begin{equation}\label{eq-comp-entropy}
\nu(g)\geq \nu(g_{\sph^n})+\ln\frac{\operatorname{Vol}_g(M)}{|\sph^n|}+\frac{n}{2}\ln(1+\e);
\end{equation}
\item Ma-Wang's stability theorem for  normalized Ricci flow (see \cite{MW}): For any $p>\frac{n}{2}$, there is a positive constant $\delta_{\rm MW}(n,p)$ such that for any closed Riemannian manifold $(M^n,g)$ with 
\begin{equation}
\operatorname{Vol}_g(M)=|\sph^n|
\end{equation}
and
\begin{equation}\label{eq-stability}
\int_M((n-1)-\lambda_g(x))_+^pdV_g<\delta_{\rm MW}(n,p),
\end{equation} 
where $\lambda_g(x)$ is the minimal eigenvalue of $\Rc_g(x)$ for any $x\in M$, the normalized Ricci flow with initial data $g$ is immortal and converges exponentially to a metric of constant sectional curvature one.
\end{enumerate}
The constant $\e_n$ in Theorem \ref{thm-JLW}  is given by 
\begin{equation}\label{eq-e-n}
\e_n=\min\left\{\frac{n}{n-1}e^{-\frac1n}-1, \left(\frac{\delta_{\rm MW}(n,n)}{|\sph^n|(n-1)^n}\right)^\frac1n\right\}
\end{equation}
in \cite{JLW}. The requirement that 
$$\e_n\leq\frac{n}{n-1}e^{-\frac1n}-1$$ 
is to guarantee that \eqref{eq-comp-entropy} is true, and the requirement that $$\e_n\leq\left(\frac{\delta_{\rm MW}(n,n)}{|\sph^n|(n-1)^n}\right)^\frac1n$$
is to guarantee that \eqref{eq-stability} is true so that Ma-Wang's stability theorem can be applied. 

In this short note, we characterize the equality case of Theorem \ref{thm-JLW}. More precisely, we prove the following result.
\begin{thm}\label{thm-main}
Let the notations and assumptions be the same as in Theorem \ref{thm-JLW}. Moreover, suppose that  
\[
\operatorname{Vol}_g(M) =|\mathbb{S}^n| (1+\epsilon)^{-n/2}.
\]
Then, $(M,g)$ is isometric to the round sphere of radius $(1+\e)^{-1/2}$.
\end{thm}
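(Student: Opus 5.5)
We would follow the Jiang-Li-Wang argument and track where equality is forced. The natural route is through the $\nu$-entropy: equality in the volume bound should force equality in the entropy comparison \eqref{eq-comp-entropy}, and then in the Pólya-Szegő and Beckner steps, which yields rigidity.

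\textbf{Step 1: entropy identity from equality in the volume.} First recall the structure of the proof in \cite{JLW}. Set $\tilde g=\lambda g$ with $\lambda=(1+\e)\ldots$, normalized so that $\Vol_{\tilde g}(M)=|\sph^n|$; in the equality case this means $\lambda=1+\e$. Then $\Rc_{\tilde g}\ge \frac{n-1}{1+\e}\tilde g$, so \eqref{eq-stability} holds, and Ma-Wang's normalized Ricci flow starting at $\tilde g$ converges to a round metric $g_\infty$ of the same volume. Since the normalized flow preserves volume and $\nu$ is monotone along it (modulo scaling), we get $\nu(\tilde g)\le \nu(g_{\sph^n})$. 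Scale invariance of $\nu$ gives $\nu(g)=\nu(\tilde g)$. Combining with \eqref{eq-comp-entropy}, whose right-hand side equals $\nu(g_{\sph^n})$ in the equality case, yields $\nu(g)=\nu(g_{\sph^n})$.

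\textbf{Step 2: the Ricci flow is static.} Equality in Perelman's monotonicity along the normalized flow from $\tilde g$ means the flow is a gradient shrinking soliton at every time. Concretely, at the initial time
\[
\Rc_{\tilde g}+\Hess f=\tfrac{1}{2\tau}\tilde g
\]
for the minimizer $f$. A compact shrinking soliton that flows exponentially to the round sphere must itself be round. For a more self-contained argument, we can instead use the rigidity of \eqref{eq-comp-entropy}. Take the minimizer $u$ for $\nu(g)$ at the optimal scale. In the chain of inequalities producing \eqref{eq-comp-entropy}, the Pólya-Szegő step must then be an equality. Equality there (as in \cite{MS}) forces the level sets of $u$ to have the isoperimetric profile of the model sphere of volume $\Vol_g(M)$, i.e.\ of radius $(1+\e)^{-1/2}$.

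\textbf{Step 3: scalar curvature equality and rigidity.} In the entropy chain, the term $\int \Scal\, u^2$ is bounded below by $n(n-1)(1+\e)$. Equality forces $\Scal_g\equiv n(n-1)(1+\e)$ on the support of $u$, which is all of $M$ since the minimizer is positive. We then use the Ricci bound together with the equality in the isoperimetric (Lévy-Gromov/Bishop-type) comparison coming from Pólya-Szegő. Since $\Vol_g(M)=|\sph^n|(1+\e)^{-n/2}$, the isoperimetric profile of $(M,g)$ must coincide with that of the sphere of radius $(1+\e)^{-1/2}$. Alternatively, the soliton identity from Step 2 gives $\Rc_g+\Hess f=c\,g$. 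Taking the trace and using constant scalar curvature gives $\Delta f=$ const, so $f$ is constant and $g$ is Einstein with $\Rc_g=(n-1)(1+\e)g$. Finally, an Einstein manifold with this Ricci constant and the volume of the round sphere of radius $(1+\e)^{-1/2}$ is isometric to it by Bishop-Gromov rigidity.

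\textbf{Main obstacle.} The delicate point is Step 2/3: making precise that equality propagates through the entropy comparison. We must show that a minimizer of $\nu$ exists at some scale $\tau$ (or handle $\tau\to\infty$, which cannot occur here since $\nu(g_{\sph^n})$ is attained). We must also check that every inequality in \eqref{eq-comp-entropy}, including the choice $\e\le\frac{n}{n-1}e^{-1/n}-1$, becomes an equality. I would first try the soliton route via equality in Perelman monotonicity, since it avoids the equality theory for Pólya-Szegő.
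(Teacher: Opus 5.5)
Your argument is essentially the paper's. Step 1 matches it: rescaling to $(1+\e)g$, Ma--Wang convergence to the unit sphere, then monotonicity of $\nu$ together with \eqref{eq-comp-entropy} forcing $\nu$ to be constant along the flow. The soliton conclusion of Step 2 also matches, and your one-line claim that a soliton flowing to the round sphere is round is exactly the paper's final step: under the volume-normalized flow a soliton moves only by diffeomorphisms, since volume preservation forces $\alpha=1$, so every $h(t)$ is isometric to $h$ and hence to the round limit.

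The P\'olya--Szeg\"o and Einstein/Bishop--Gromov alternatives are unnecessary. The Einstein route needs you to open the JLW chain to extract $\Scal_g\equiv n(n-1)(1+\e)$, whereas the paper uses \eqref{eq-comp-entropy} only as a black box. The isoperimetric claim is also wrong as stated: P\'olya--Szeg\"o under $\Rc_g\ge(n-1)g$ compares with the unit sphere, not the sphere of radius $(1+\e)^{-1/2}$, and L\'evy--Gromov equality cannot occur for a nontrivial level set when $\Vol_g(M)<|\sph^n|$.
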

The idea of the proof of the result is really simple. When the equality of the volume holds, combining the monotonicity of $\nu$-entropy along Ricci flow, the entropy comparison \eqref{eq-comp-entropy} and Ma-Wang's stability theorem, one knows that $\nu(g(t))$ is constant along the Ricci flow starting from $h:=(1+\e)g$. Then, by Perelman's formula for derivative of $\W$-entropy, we know that $g$ is a gradient shrinking soliton. From this, it is not hard to get the conclusion by Ma-Wang's stability theorem again. 

Bray's conjecture was first proposed and studied in Bray's thesis \cite{Br}. Further progresses can be found in \cite{GV,Bre,Yu,Zh}. Very recently, Kwong \cite{Kw} obtained a related result: Let $(M^n,g)$ be a closed Riemannian manifold with 
$$\Rc_g\geq(n-1)g\mbox{ and }\Scal_g\geq n(n-1)(1+\e)$$
for some $\e\geq0$. Then,
$$\Vol_g(M)\leq |\sph^n|(1+n\e)^{-\frac12}$$
with equality if and only if $\e=0$ and $(M,g)$ is isometric to the standard unit sphere. One should note that although the upper bound of Kwong's result is greater than that of Theorem \ref{thm-JLW}, Kwong's result does not require that $\e$ is small enough. The method used by Kwong \cite{Kw} is different with that of Jiang-Li-Wang \cite{JLW}.
\section{Preliminaries and proof}
In this section, we recall some preliminaries on Perelman's $\W$-entropy and $\nu$-entropy, and prove Theorem \ref{thm-main}.

Let $M^n$ be a closed manifold, $g$ be a Riemannian metric on $M$, $f\in C^\infty(M)$ and $\tau>0$. Then, the $\mathcal W$-entropy of Perelman is defined as 
$$\W(g,f,\tau)=\int_M\left(\tau(\|\nabla f\|^2+\Scal_g)+f-n\right)(4\pi\tau)^{-\frac{n}{2}}e^{-f}dV_g.$$
The following formula for the derivative of $\W$ along Ricci flow is an important   discovery of Perelman (see Perelman \cite{Pe} or Cao-Zhu \cite[P. 206, Proposition 1.5.8]{CZ} for more details). 
\begin{thm}\label{thm-derivative-W}
Let $M^n$ be a closed manifold and $g(t)$ be a Ricci flow on $M$. Let $\tau(t)$ be a positive function and $f(t)$ be a family of smooth functions on $M$ such that $\frac{d\tau}{dt}=-1$ and $u(x,t):=(4\pi\tau(t))^{-\frac{n}{2}}e^{-f(x,t)}$ satisfies the conjugate heat equation:
$$\frac{\p u}{\p t}=-\Delta_{g(t)} u+\Scal_{g(t)}u.$$
Then,
\begin{equation}\label{eq-derivative-W}
\begin{split}
&\frac{d}{dt}\mathcal W(g(t),f(t),\tau(t))\\
=&2\tau(t)\int_M\left\|\Rc_{g(t)}+\Hess_{g(t)}(f(t))-\frac{1}{2\tau(t)}g(t)\right\|_{g(t)}^2(4\pi\tau(t))^{-\frac{n}{2}}e^{-f(t)}dV_{g(t)}
\end{split}
\end{equation}
where $\Delta_g$ and $\Hess_g$ are the Laplacian operator and the Hessian operator w.r.t. $g$ and $\|\cdot\|_g$ means taking norm w.r.t. $g$.
\end{thm}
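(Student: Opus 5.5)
The plan is to verify \eqref{eq-derivative-W} by a direct computation. I will first reduce everything to a pointwise differential identity for a suitable density, and then integrate that identity over $M$.

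\textbf{Step 1: evolution of $f$ and of the measure.} Along Ricci flow $\partial_t g=-2\Rc$, so $\partial_t dV_{g(t)}=-\Scal\, dV_{g(t)}$. The conjugate heat equation therefore gives
$$\frac{d}{dt}\int_M u\,dV=\int_M(-\Delta u+\Scal\,u-\Scal\,u)\,dV=0,$$
so the measure $u\,dV$ has constant total mass. Substituting $u=(4\pi\tau)^{-n/2}e^{-f}$ and using $\tau'=-1$, the conjugate heat equation becomes
$$\partial_t f=-\Delta f+\|\nabla f\|^2-\Scal+\frac{n}{2\tau}.$$

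\textbf{Step 2: rewrite the integrand.} Since $\Delta e^{-f}=(\|\nabla f\|^2-\Delta f)e^{-f}$ and $M$ is closed, $\int_M\|\nabla f\|^2u\,dV=\int_M \Delta f\,u\,dV$. Hence $\W=\int_M v\,dV$ with
$$v=\big[\tau(2\Delta f-\|\nabla f\|^2+\Scal)+f-n\big]u.$$
I will then prove Perelman's pointwise identity
$$\Big(-\partial_t-\Delta+\Scal\Big)v=-2\tau\Big\|\Rc+\Hess f-\frac{1}{2\tau}g\Big\|^2u .$$
Granting this, the theorem follows at once. Indeed,
$$\frac{d}{dt}\int_M v\,dV=\int_M(\partial_t v-\Scal\,v)\,dV.$$
Because $\int_M\Delta v\,dV=0$, this equals $-\int_M(-\partial_t-\Delta+\Scal)v\,dV$, which is exactly the right-hand side of \eqref{eq-derivative-W}.

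\textbf{Step 3: the pointwise identity (main obstacle).} Write $v=Hu$ with $H=\tau(2\Delta f-\|\nabla f\|^2+\Scal)+f-n$. Since $(-\partial_t-\Delta+\Scal)u=0$,
$$(-\partial_t-\Delta+\Scal)v=u\big(-\partial_t H-\Delta H\big)-2\langle\nabla H,\nabla u\rangle=-u\big(\partial_t H+\Delta H-2\langle\nabla f,\nabla H\rangle\big).$$
The task is to compute $\partial_tH$ using the following evolution equations:
\begin{itemize}
\item the scalar curvature evolves by $\partial_t\Scal=\Delta\Scal+2\|\Rc\|^2$;
\item the Laplacian evolves by $\partial_t(\Delta f)=\Delta(\partial_t f)+2\langle\Rc,\Hess f\rangle$, where the Christoffel-symbol term cancels by the contracted Bianchi identity;
\item the gradient norm evolves by $\partial_t\|\nabla f\|^2=2\Rc(\nabla f,\nabla f)+2\langle\nabla f,\nabla\partial_tf\rangle$.
\end{itemize}
Into these I substitute $\partial_t f$ from Step 1. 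I then use the Bochner formula
$$\Delta\|\nabla f\|^2=2\|\Hess f\|^2+2\langle\nabla f,\nabla\Delta f\rangle+2\Rc(\nabla f,\nabla f)$$
together with the contracted Bianchi identity $\operatorname{div}\Rc=\tfrac12\nabla\Scal$. After these substitutions all terms with third derivatives of $f$ and all terms involving $\nabla\Scal$ should cancel. What remains should assemble into
$$2\tau\Big(\|\Rc\|^2+2\langle\Rc,\Hess f\rangle+\|\Hess f\|^2\Big)-2(\Scal+\Delta f)+\frac{n}{2\tau},$$
which is precisely $2\tau\|\Rc+\Hess f-\tfrac{1}{2\tau}g\|^2$. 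The $-\tfrac{d\tau}{dt}=1$ factor multiplying $(2\Delta f-\|\nabla f\|^2+\Scal)$ supplies the linear terms. The $\partial_t f$ term arising from the "$+f$" in $H$ supplies the $\tfrac{n}{2\tau}$ and the remaining first-order terms.

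This bookkeeping is the only real difficulty. To keep it manageable I will organize the computation by order of derivatives of $f$: third-order terms, then $\Hess f$ terms, then gradient terms, then zeroth-order terms. I will also check the result against the Gaussian soliton on $\mathbb R^n$, where $f=|x|^2/4\tau$ and both sides vanish. Should the direct route become unwieldy, I will instead cite the standard computation in Perelman \cite{Pe} or Cao-Zhu \cite[Proposition 1.5.8]{CZ}, as the paper does.
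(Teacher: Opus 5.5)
Your proposal is correct, and it is the standard computation the paper relies on: the paper gives no proof of its own but cites Perelman \cite{Pe} and Cao--Zhu \cite{CZ}, and your route is precisely Perelman's. You rewrite $\W$ with $2\Delta f-\|\nabla f\|^2$ and integrate the pointwise identity $(-\partial_t-\Delta+\Scal)v=-2\tau\|\Rc+\Hess f-\frac{1}{2\tau}g\|^2u$ over $M$. Carrying out your Step 3 with the listed evolution equations and the Bochner formula, the terms in $\Delta\Delta f$, $\langle\nabla f,\nabla\Delta f\rangle$, $\Delta\Scal$ and $\nabla\Scal$ all cancel, leaving exactly $2\tau\|\Rc+\Hess f\|^2-2(\Scal+\Delta f)+\frac{n}{2\tau}$ as you claim.
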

Let 
$$\mu(g,\tau)=\inf\left\{\mathcal W(g,f,\tau)\ \bigg|\ \int_M(4\pi\tau)^{-\frac{n}{2}}e^{-f}dV_g=1\right\}$$
and 
$$\nu(g)=\inf_{\tau>0}\mu(g,\tau).$$
It is well known that if $g(t)$ is a Ricci flow, then $\nu(g(t))$ is increasing. Note that 
\begin{equation}\label{eq-scale-inv}
\nu(cg)=\nu(g),\ \forall\ c>0.
\end{equation}
So, $\nu(g(t))$ is also increasing when $g(t)$ is a normalized Ricci flow. More precisely, one has the following conclusion.
\begin{prop}\label{prop-derivative-nu}
Let $M^n$ be a closed manifold and $g(t)$ with $t\in [a,b]$ be a Ricci flow on $M$. Then, for any $t_0\in (a,b]$, 
\begin{equation}\label{eq-derivative-nu}
D_-\nu(t_0)\geq 2\tau_0\int_M\left\|\Rc_g(t_0)+\Hess_{g(t_0)}(f_0)-\frac{1}{2\tau_0}g(t_0)\right\|_{g(t)}^2(4\pi \tau_0)^{-\frac{n}{2}}e^{-f_0}dV_{g(t_0)}
\end{equation}
where $(f_0,\tau_0)$ is the minimizer so that 
$$\nu(g(t_0))=\W(g(t_0),f_0,\tau_0).$$
Here $\nu(t):=\nu(g(t))$ and 
$$D_-\nu(t_0)=\liminf_{t\to t_0^-}\frac{\nu(t)-\nu(t_0)}{t-t_0}.$$
In particular, if there is a $t_0\in (a,b]$ such that $\nu'(t_0)=0$, then $g(t)$ is a gradient shrinking soliton. 
\end{prop}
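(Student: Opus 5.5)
We need to prove Proposition \ref{prop-derivative-nu}: a lower bound on the lower left Dini derivative of $\nu$ at $t_0$, and the soliton conclusion when $\nu'(t_0)=0$.

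The approach is a backward comparison argument using Theorem \ref{thm-derivative-W}. Fix $t_0\in(a,b]$. We assume, as is standard on closed manifolds when $\nu(g(t_0))$ is finite (for instance when the entropy is attained, e.g.\ for positive Yamabe-type situations, which is the setting of the paper), that there is a minimizing pair $(f_0,\tau_0)$ with $\tau_0>0$, $f_0$ smooth, $\int_M(4\pi\tau_0)^{-n/2}e^{-f_0}dV_{g(t_0)}=1$, and $\nu(g(t_0))=\W(g(t_0),f_0,\tau_0)$.

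First, set $\tau(t)=\tau_0+t_0-t$, which is positive for $t\le t_0$. Solve the conjugate heat equation backward from time $t_0$ with terminal datum $u(t_0)=(4\pi\tau_0)^{-n/2}e^{-f_0}$ on $[a,t_0]$. Since this is a backward parabolic equation with terminal data, it is well posed on the closed manifold. By the strong maximum principle $u>0$, so $f(t):=-\ln u-\frac n2\ln(4\pi\tau)$ is smooth. The conjugate heat equation preserves $\int_M u\,dV_{g(t)}$, so $(f(t),\tau(t))$ remains admissible. Hence for $t<t_0$,
$$\nu(t)\le\mu(g(t),\tau(t))\le \W(g(t),f(t),\tau(t)),$$
with equality at $t=t_0$.

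For $t<t_0$ (so $t-t_0<0$) this gives
$$\frac{\nu(t)-\nu(t_0)}{t-t_0}\ge\frac{\W(t)-\W(t_0)}{t-t_0}.$$
The right side tends to $\frac{d}{dt}\W|_{t_0}$ as $t\to t_0^-$, because everything is smooth up to $t_0$. Taking $\liminf$ and inserting \eqref{eq-derivative-W} at $t=t_0$ gives \eqref{eq-derivative-nu}.

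For the final claim, $\nu$ is nondecreasing along the flow. If $\nu'(t_0)=0$, then $D_-\nu(t_0)=0$, so the nonnegative integral in \eqref{eq-derivative-nu} vanishes. Since the weight is positive, this forces
$$\Rc_{g(t_0)}+\Hess_{g(t_0)}f_0=\frac{1}{2\tau_0}g(t_0),$$
so $g(t_0)$ is a gradient shrinking soliton. By uniqueness of the Ricci flow on closed manifolds (Hamilton; Chen–Zhu for the backward direction), the flow $g(t)$ is then the self-similar solution generated by this soliton.

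The main obstacle is the existence and smoothness of the minimizer $(f_0,\tau_0)$, in particular showing that the infimum over $\tau$ is attained at some finite $\tau_0>0$. I would take this from the literature: $\mu(g,\tau)\to0$ as $\tau\to0$, and $\mu(g,\tau)\to+\infty$ as $\tau\to\infty$ when $\lambda_1(-4\Delta+\Scal)>0$, which holds here since the scalar curvature is positive. Attainment of $\mu(g,\tau)$ by a smooth minimizer is Rothaus's result.
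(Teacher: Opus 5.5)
Your proposal is correct and is essentially the paper's proof: evolve the minimizer's density backward by the conjugate heat equation with $\tau(t)=\tau_0+t_0-t$, use $\W(g(t),f(t),\tau(t))\geq\nu(t)$ with equality at $t_0$, divide by $t-t_0<0$, and take the $\liminf$ using Theorem \ref{thm-derivative-W}. Two side remarks need correcting: the limits $\mu(g,\tau)\to0$ as $\tau\to0$ and $\mu(g,\tau)\to+\infty$ as $\tau\to\infty$ do not by themselves give a finite minimizing $\tau_0>0$ unless you also know $\nu(g)<0$ (this holds in the paper's application, where $\nu=\nu(g_{\sph^n})<0$), and backward uniqueness of Ricci flow on closed manifolds is due to Kotschwar, not Chen--Zhu.
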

\begin{proof}
Let $u_0=(4\pi \tau_0)^{-\frac{n}{2}}e^{-f_0}$, $u(x,t)$ be the solution to the conjugate heat equation:
$$\frac{\p u}{\p t}=-\Delta_{g(t)} u+\Scal_{g(t)} u$$
with $u(t_0)=u_0$ and $$\tau(t)=\tau_0+t_0-t.$$
Let $f(x,t)$ be such that $u(x,t)=(4\pi \tau(t))^{-\frac{n}{2}}e^{-f(x,t)}$. Then
$$\W(g(t),f(t),\tau(t))\geq\nu(t)$$
for $t<t_0$, by the definition of $\nu$, and 
$$\W(g(t_0),f(t_0),\tau(t_0))=\nu(t_0)$$
by that $f(t_0)=f_0$ and $\tau(t_0)=\tau_0$. So, 
$$\frac{\W(g(t),f(t),\tau(t))-\W(g(t_0),f(t_0),\tau(t_0))}{t-t_0}\leq \frac{\nu(t)-\nu(t_0)}{t-t_0}$$
for $t<t_0$. Applying $\displaystyle\liminf_{t\to t_0^-}$ to the both sides of the inequality above and using Theorem \ref{thm-derivative-W}, we get \eqref{eq-derivative-nu}. This completes the proof of the proposition.
\end{proof}
Finally, we prove Theorem \ref{thm-main}.
\begin{proof}[Proof of Theorem \ref{thm-main}]
Substituting $\operatorname{Vol}_g(M)=|\sph^n|(1+\e)^{-\frac{n}{2}}$ into  \eqref{eq-comp-entropy}, one has
\begin{equation}\label{eq-entropy-g}
\nu(g)\geq \nu(g_{\sph^n}).
\end{equation}
Let \(h := (1+\epsilon)g\). Then
\[
\operatorname{Vol}_h(M) = (1+\epsilon)^{n/2} \operatorname{Vol}_g(M) = |\mathbb{S}^n|.
\]
and 
\begin{equation}\label{eq-entropy-h}
\nu(h)\geq \nu(g_{\sph^n})
\end{equation}
by \eqref{eq-scale-inv} and \eqref{eq-entropy-g}. Moreover, since the Ricci tensor is invariant under constant rescaling,
\[
\operatorname{Rc}_h = \operatorname{Rc}_g \ge (n-1)g = \frac{n-1}{1+\epsilon}\,h.
\]
So, $\lambda_h\geq \frac{n-1}{1+\epsilon}$ and 
$$(n-1)-\lambda_h\leq \frac{(n-1)\e}{1+\e}<(n-1)\e$$
which implies that 
$$\int_M((n-1)-\lambda_h(x))_+^ndV_h<(n-1)^n|\sph_n|\e^n\leq \delta_{\rm MW}(n,n)$$
by that $\e\leq\e_n$ with $\e_n$ in \eqref{eq-e-n}. Let $h(t)$ be the normalized Ricci flow with $h(0)=h$. Then, Ma-Wang's stability theorem asserts that $h(t)$ is immortal and 
$$h(t)\to h_\infty\ (t\to\infty)$$
exponentially with $h_\infty$ a metric of constant sectional curvature one. Because $$\operatorname{Vol}_{h_\infty}(M)=\operatorname{Vol}_h(M)=|\sph^n|,$$
we know that $(M,h_\infty)$ is isometric to $(\sph^n,g_{\sph^n})$. Thus 
$$\nu(h_\infty)=\nu(g_{\sph^n}).$$
By that $\nu(h(t))$ is increasing on $t$, we have 
$$\nu(h)\leq \nu(h_\infty)=\nu(g_{\sph^n}).$$ 
Combining this with \eqref{eq-entropy-h}, one has 
$$\nu(h)=\nu(g_{\sph^n})=\nu(h_\infty).$$
By the monotonicity of $\nu(h(t))$ again,
$$\nu(h(t))=\nu(g_{\sph^n})$$
for all $t\geq 0$. Let $g(t)$ be the Ricci flow on $M$ with $g(0)=h$. By that $\nu$ is invariant under constant rescaling again, $\nu(g(t))=\nu(g_{\sph^n})$ is constant. By Proposition \ref{prop-derivative-nu}, $g(t)$ is a gradient shrinking soliton. This implies that for any $t_2>t_1\geq 0$, there is a positive constant $\alpha$ and a diffeomorphism $\varphi$ such that 
$$h(t_2)=\alpha \varphi^*h(t_1).$$
Note that $\operatorname{Vol}_{h(t)}(M)$ is constant along normalized Ricci flow. So $\alpha=1$. This means that $(M,h(t_1))$ and $(M,h(t_2))$ are isometric for any $t_2>t_1\geq0$. Then, by Ma-Wang's stability theorem again and that $(M,h_\infty)$ is isometric to $(\sph^n,g_{\sph^n})$, $(M,h(t))$ is isometric to $(\sph^n,g_{\sph^n})$ for any $t\geq 0$. This completes the proof of the theorem. 
\end{proof}
\subsection*{Acknowledgments}
The authors used DeepSeek as an AI-assisted tool in the preparation 
of this note and take full responsibility for the contents.

\end{document}